\pdfoutput=1
\RequirePackage{ifpdf}
\ifpdf 
\documentclass[pdftex]{sigma}
\else
\documentclass{sigma}
\fi

\newcommand{\Ric}{\mathrm{Ric}}

\newcommand{\e}{\epsilon}

\renewcommand{\S}{\Sigma}

\renewcommand{\Im}{\mathrm{Im} \,}

\newcommand{\E}{\mathcal E}
\newcommand{\A}{\mathcal A}

\newcommand{\supp}{\mathrm{supp}}

\newcommand{\de}{\delta}
\renewcommand{\b}{\beta}
\renewcommand{\d}{\partial}

\newcommand{\abs}[1]{\left\lvert#1\right\rvert}

\renewcommand{\H}{\mathrm H}
\newcommand{\Ho}{\mathcal H}
\renewcommand{\E}{\mathcal E}

\newcommand{\U}{\mathcal U}

\newcommand{\T}{\mathrm T}
\newcommand{\md}{\mathrm d}
\newcommand{\p}{\mathrm p}
\newcommand{\q}{\mathrm q}

\newcommand{\Char}{\mathrm{Char}}
\newcommand{\s}{\sigma}

\newcommand{\sgn}{\operatorname{sgn}}

\newcommand{\R}[0]{\mathbb{R}}
\newcommand{\C}[0]{\mathbb{C}}

\numberwithin{equation}{section}

\newtheorem{Theorem}{Theorem}[section]
\newtheorem*{Theorem*}{Theorem}
\newtheorem{Corollary}[Theorem]{Corollary}

\newtheorem{Proposition}[Theorem]{Proposition}
 { \theoremstyle{definition}
\newtheorem{Definition}[Theorem]{Definition}

\newtheorem{Remark}[Theorem]{Remark} }
\newtheorem{Assumption}[Theorem]{Assumption}

\begin{document}


\renewcommand{\thefootnote}{}

\newcommand{\arXivNumber}{2306.09213}

\renewcommand{\PaperNumber}{052}

\FirstPageHeading

\ShortArticleName{Stationarity and Fredholm Theory in Subextremal Kerr--de Sitter Spacetimes}

\ArticleName{Stationarity and Fredholm Theory\\ in Subextremal Kerr--de Sitter Spacetimes\footnote{This paper is a~contribution to the Special Issue on Global Analysis on Manifolds in honor of Christian B\"ar for his 60th birthday. The~full collection is available at \href{https://www.emis.de/journals/SIGMA/Baer.html}{https://www.emis.de/journals/SIGMA/Baer.html}}}

\Author{Oliver PETERSEN~$^{\rm a}$ and Andr\'as VASY~$^{\rm b}$}

\AuthorNameForHeading{O.~Petersen and A.~Vasy}

\Address{$^{\rm a)}$~Department of Mathematics, Stockholm University, 10691 Stockholm, Sweden}
\EmailD{\href{mailto:oliver.petersen@math.su.se}{oliver.petersen@math.su.se}}

\Address{$^{\rm b)}$~Department of Mathematics, Stanford University, CA 94305-2125, USA}
\EmailD{\href{mailto:andras@math.stanford.edu}{andras@math.stanford.edu}}

\ArticleDates{Received June 16, 2023, in final form June 10, 2024; Published online June 20, 2024}

\Abstract{In a recent paper, we proved that solutions to linear wave equations in a~subextremal Kerr--de~Sitter spacetime have asymptotic expansions in quasinormal modes up to a~decay order given by the normally hyperbolic trapping, extending the results of Vasy~(2013). One central ingredient in the argument was a new definition of quasinormal modes, where a non-standard choice of stationary Killing vector field had to be used in order for the Fredholm theory to be applicable. In this paper, we show that there is in fact a variety of allowed choices of stationary Killing vector fields. In particular, the horizon Killing vector fields work for the analysis, in which case one of the corresponding ergoregions is completely removed.}

\Keywords{subextremal Kerr--de~Sitter spacetime; resonances; quasinormal modes; radial points; normally hyperbolic trapping}

\Classification{35L05; 35P25; 58J45; 83C30}


\renewcommand{\thefootnote}{\arabic{footnote}}
\setcounter{footnote}{0}

\section{Introduction}

Kerr--de~Sitter spacetimes model stationary rotating black holes in an otherwise empty expanding universe.
They solve Einstein's vacuum equation $\Ric(g) = \Lambda g$ with a positive cosmological constant $\Lambda > 0$.
The mass of the black hole is denoted $m > 0$ and the angular momentum of the black hole is denoted $a \in \R$.
The Kerr--de~Sitter black hole admits two horizons, called the event horizon, located at radius $r_{\rm e}$, and the cosmological horizon, located at radius $r_{\rm c}$.
If a future directed causal curve is ever at a~radius smaller than $r_{\rm e}$ (resp.~larger than $r_{\rm c}$), it will never be able to reach a radius larger than $r_{\rm e}$ (resp.~smaller than $r_{\rm c}$), meaning that it is stuck forever beyond the event horizon (resp.~beyond the cosmological horizon).
Thus, the cosmological horizon has very similar role as the event horizon (which is the boundary of the black hole) both from a~physics and a mathematics perspective.
The radii $r_{\rm e}$ and $r_{\rm c}$ are the two largest roots of a~certain quartic polynomial $\mu$, given in terms of $\Lambda$, $a$ and $m$ in equation \eqref{eq: mu} below.
We say that the Kerr--de~Sitter black hole is \emph{subextremal} if $\mu$ has four distinct real roots.

Just like the closely related Kerr spacetime, which can be seen as the limit when $\Lambda \to 0$, the Kerr--de~Sitter spacetime has a rotational symmetry, given in the standard Boyer--Lindquist coordinates (see Definition~\ref{def: BL metric}) by the Killing vector field $\d_\phi$, and a stationarity symmetry, given in the same coordinates by for example the Killing vector field $\d_t$.
However, there is an ambiguity in what precise symmetry should describe the stationarity of the black hole.
Indeed,
$c_1 \d_t + c_2 \d_\phi$
is a Killing vector field, for any constants $c_1, c_2 \in \R$.
Moreover, one can check that no such Killing vector field is timelike everywhere between the horizons (cf.~Remark~\ref{rmk: time inv}).
In the Kerr spacetime, there is on the other hand a canonical choice of Killing vector field to describe the stationarity, $\d_t$ namely is the only one which is timelike at large distances from the black hole.
In the Kerr--de~Sitter spacetime, there is no such analogue, and it is a priori not clear what Killing vector field should be modeling the stationarity.

The purpose of this paper is to illustrate that, in the Kerr--de~Sitter spacetime, many natural properties are satisfied if we choose
\[
	\T
		:= \d_t + \frac a{r_0^2 + a^2}\d_\phi
\]
to be the stationary Killing vector field, where we may choose any $r_0 \in [r_{\rm e}, r_{\rm c}]$,
where $r_{\rm e}$ is the radius to the event horizon and $r_{\rm c}$ is the radius to the cosmological horizon.
Note that as~${\Lambda \to 0}$, then $r_{\rm c} \to \infty$, so in the limit, an allowed choice for~$\T$ is indeed the standard choice~$\d_t$ in the Kerr spacetime.
The main new observation in this paper is that there are no trapped lightlike geodesics with trajectories orthogonal to $\T$.
In fact, the geodesic flow of the lightlike geodesics with trajectories orthogonal to $\T$ is very similar to the much easier case when~${a = 0}$ (where indeed $\T = \d_t$).
However, it was observed in \cite[p.~486]{V2013} that this is not the case if we consider the lightlike geodesics with trajectories orthogonal to $\d_t$ instead.
Our results in this paper generalize the main results of \cite{PV2023}, where the same statements were proven for $\T$, when~${r_0 \in (r_{\rm e}, r_{\rm c})}$ was the unique point such that $\mu'(r_0)= 0$.

Besides using the computations in \cite{PV2023}, this paper relies heavily on the microlocal analysis developed in \cite{V2013}, in particular on the radial point estimates and the Fredholm theory for non-elliptic operators.
For our application to wave equations, we rely on microlocal estimates near normally hyperbolic trapping in the sense of Wunsch and Zworski in \cite{WZ2011}, see also the improved results by Dyatlov in \cite{Dya2015, Dya2015b, Dya2016}.
For more references on results related to this paper, we refer to the introductions in \cite{PV2023,PV2021}.

\subsection{Kerr--de~Sitter spacetimes}
The geometry of the Kerr--de~Sitter spacetimes depends on a certain polynomial, given by
\begin{equation} \label{eq: mu}
	\mu(r)
		:= - \frac{\Lambda r^4}3 + \left( 1 - \frac{\Lambda a^3}3 \right) r^2 - 2mr + a^2.
\end{equation}

\begin{Definition} \label{def: BL metric}
Assume that $\mu$ has four distinct real roots $r_- < r_{\rm C} < r_{\rm e} < r_{\rm c}$. The manifold
\[
	M := \R_t \times (r_{\rm e}, r_{\rm c})_r \times S^2_{\phi, \theta},
\]
with real analytic metric
\begin{align*}
	g
		={}& \big(r^2 + a^2 \cos^2(\theta)\big)\left( \frac{\md r^2}{\mu(r)} + \frac{\md \theta^2}{c(\theta)}\right) + \frac{c(\theta)\sin^2(\theta)}{b^2\big(r^2 + a^2 \cos^2(\theta)\big)}\big(a \md t - \big(r^2 + a^2\big)\md \phi\big)^2 \\
		& - \frac{\mu(r)}{b^2\big(r^2 + a^2 \cos^2(\theta)\big)}\big(\md t - a \sin^2(\theta)\md \phi\big)^2,
\end{align*}
where
\[
	b := 1 + \frac{\Lambda a^2}3, \qquad c(\theta) := 1 + \frac{\Lambda a^2}3 \cos^2(\theta),
\]
is called the domain of outer communication in a \emph{subextremal Kerr--de~Sitter spacetime} (in Boyer--Lindquist coordinates).
\end{Definition}
One easily verifies that this metric extends real analytically to the north and south poles $\theta = 0, \pi$.

\begin{Remark}
Note that $\d_t$ and $\d_\phi$ are Killing vector fields of $g$.
\end{Remark}

The Boyer--Lindquist coordinates used above become singular at the roots of $\mu(r)$.
As a~physical model of a~rotating black hole in an expanding spacetime, however, the two largest roots of~$\mu(r)$ are supposed to point out the position of the event and cosmological horizons.
We extend the coordinates over the future event horizon and future cosmological horizon with the following coordinate change: $t_*:= t - \Phi(r)$, $\phi_*:= \phi - \Psi(r)$,
where $\Phi$ and $\Psi$ satisfy
\begin{align*}
		\Phi'(r)
			= b \frac{r^2 + a^2}{\mu(r)} f(r), \qquad
		\Psi'(r)
			= b \frac a{\mu(r)} f(r)
\end{align*}
and
\[
	f\colon\ (r_{\rm e} - \de, r_{\rm c} + \de) \to \R
\]
is a real analytic function, for a small enough $\de > 0$ such that $f(r_{\rm e})
		= -1$, $f(r_{\rm c}) = 1$.
The new form of the metric is
\begin{align*}
	g_*
		={}& \big(r^2 + a^2 \cos^2(\theta)\big)\frac{1 - f(r)^2}{\mu(r)} \md r^2 - \frac 2 b f(r) \big(\md t_* - a \sin^2(\theta) \md \phi_*\big)\md r \\
		& - \frac{\mu(r)}{b^2\big(r^2 + a^2 \cos^2(\theta)\big)}\big(\md t_* - a \sin^2(\theta) \md \phi_* \big)^2 \\
		& + \frac{c(\theta)\sin^2(\theta)}{b^2\big(r^2 + a^2 \cos^2(\theta)\big)}\big(a\md t_* - \big(r^2 + a^2\big) \md \phi_*\big)^2 + \big(r^2 + a^2 \cos^2(\theta)\big)\frac{\md \theta^2}{c(\theta)},
\end{align*}
which extends real analytically to
\[
	M_*
		:= \R_{t_*} \times (r_{\rm e} - \de, r_{\rm c} + \de)_r \times S^2_{\phi_*, \theta}.
\]
Now the coordinates are not anymore singular at $r_{\rm e}$ and $r_{\rm c}$ and we get two new real analytic lightlike hypersurfaces
\begin{align*}
	\Ho_e^+
		:= \R_{t_*} \times \{r_{\rm e}\} \times S^2_{\phi_*, \theta}, \qquad
	\Ho_c^+
		:= \R_{t_*} \times \{r_{\rm c}\} \times S^2_{\phi_*, \theta},
\end{align*}
which are called the \emph{future event horizon} and the \emph{future cosmological horizon}, respectively.

\begin{Remark}
The Killing vector fields $\d_t$ and $\d_\phi$ extend to Killing vector fields $\d_{t_*}$ and $\d_{\phi_*}$ over the horizons.
\end{Remark}

\subsection{The first main result}

The main novel observation in this paper is that there are no trapped lightlike geodesics in the domain of outer communication of a subextremal Kerr--de~Sitter spacetime, with trajectories orthogonal to a certain Killing vector field.

\begin{Theorem}[no orthogonal trapping] \label{thm: T-orth trapping}
Let $r_0 \in [r_{\rm e}, r_{\rm c}]$.
All lightlike geodesics in the domain of outer communication $(M,g)$ of a subextremal Kerr--de~Sitter spacetime, with trajectories orthogonal to the Killing vector field
\[
	\T
		:= \d_t + \frac{a}{r_0^2 + a^2} \d_\phi,
\]
eventually leave the region
\[
	\R_{t_*} \times [r_{\rm e}+\e, r_{\rm c}-\e]_r \times S^2
\]
for any $\e > 0$.
Moreover, there is an open subset $\U \subset (r_{\rm e}, r_{\rm c})$, with $r_0 \in \overline{\U}$ and such that no such lightlike geodesic intersects
\begin{equation} \label{eq: timelike region}
	\R_{t_*} \times \U \times S^2.
\end{equation}
\end{Theorem}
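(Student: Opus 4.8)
The plan is to analyze the geodesic flow for lightlike geodesics whose momentum annihilates $\T$, and to show it behaves essentially like the Schwarzschild--de Sitter case ($a=0$). First I would pass to Boyer--Lindquist coordinates and write the Hamiltonian $G = g^{-1}$ on $T^*M$, using that the Kerr--de Sitter metric is of Carter type: the inverse metric separates, so along any geodesic the quantities $E := -\langle \xi, \d_t\rangle$, $L := \langle \xi, \d_\phi\rangle$, and the Carter constant $Q$ are conserved, and the $r$-motion is governed by a one-dimensional potential, roughly $\dot r^2 \sim R(r)$ with
\[
	R(r) = \bigl(b(r^2+a^2)E - ab L\bigr)^2 - \mu(r)\bigl(Q + (L - aE)^2 b^2\bigr)
\]
up to positive factors (constant $r^2 + a^2\cos^2\theta$ conformal factors, which do not affect the zero set or the lightlike condition). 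The orthogonality condition $\langle \xi, \T\rangle = 0$ reads $E = \frac{a}{r_0^2+a^2} L$, equivalently $b(r^2+a^2)E - abL = \frac{ab L}{r_0^2+a^2}\bigl((r^2+a^2) - (r_0^2+a^2)\bigr) = \frac{abL}{r_0^2+a^2}(r^2 - r_0^2)$.

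The key computation is then to substitute this into $R(r)$ and observe that the first term acquires a double zero at $r = r_0$ (it is $\bigl(\tfrac{abL}{r_0^2+a^2}\bigr)^2 (r^2-r_0^2)^2$), while the second term $-\mu(r)\bigl(Q + b^2(L-aE)^2\bigr)$ is, for a \emph{lightlike} geodesic with $\xi \neq 0$, strictly negative on $(r_{\rm e}, r_{\rm c})$: indeed $\mu > 0$ there, and $Q + b^2(L-aE)^2 \geq 0$ with equality only if $Q = 0$ and $L = aE$, which combined with $E = \frac{aL}{r_0^2+a^2}$ forces $L = E = 0$; then $R \leq 0$ with equality only at $r_0$, and one checks the lightlike condition plus $\xi\neq 0$ rules out the degenerate momentum entirely, or handles it separately (the purely angular/trapped sphere at $r_0$ does not satisfy the null condition unless the angular momentum also vanishes). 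Hence $R(r) < 0$ for $r \neq r_0$ and $R(r_0) \le 0$: there is no $r$ with $R > 0$ between the horizons, so no geodesic can be confined to any $[r_{\rm e}+\e, r_{\rm c}-\e]$ — the sign of $\dot r$ is constant and $r$ is monotone, forced toward a horizon, except possibly a measure-zero family asymptotic to $\{r = r_0\}$ which one excludes by a linearization/instability argument at the critical point $r_0$ of the effective potential (showing $r_0$ is a non-degenerate \emph{maximum}, hence unstable, so no geodesic actually limits onto it from the interior).

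For the second statement, the set $\U$ is obtained by noting that $R(r) < 0$ \emph{strictly} on $(r_{\rm e}, r_{\rm c}) \setminus \{r_0\}$ for every choice of conserved quantities compatible with the orthogonality and null conditions, with the negativity uniform in the conserved quantities after normalization (the negative term scales homogeneously and dominates). Continuity and compactness then give a neighborhood $\U$ of $r_0$ in $(r_{\rm e}, r_{\rm c})$ on which $R < 0$ uniformly, hence on which $\dot r^2 < 0$ is impossible — no orthogonal null geodesic can even enter $\R_{t_*}\times\U\times S^2$; and $r_0 \in \overline{\U}$ by construction. The main obstacle is the careful bookkeeping at $r = r_0$: one must confirm that the only would-be trapped orbit sits exactly at $r_0$, that it fails the lightlike condition (or is a single unstable orbit on the photon sphere), and that the linearized $r$-dynamics there is genuinely hyperbolic rather than degenerate — this is where the computations from \cite{PV2023} enter, since in that paper $r_0$ was exactly the critical point of $\mu$ and the analogous analysis was carried out; here $r_0$ ranges over all of $[r_{\rm e}, r_{\rm c}]$, so one must check the relevant inequalities ($\mu > 0$, sign of the second derivative of the effective potential) persist uniformly.
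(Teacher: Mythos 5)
Your use of the Carter separation and a radial potential $R(r)$ is a natural strategy and is closely related to what the paper does; the observation that the orthogonality condition $\xi_t + \frac{a}{r_0^2+a^2}\xi_\phi = 0$ forces the leading term of $R$ to acquire a double zero at $r=r_0$ is exactly the right structural fact. However, the central claim of your argument---that $R(r)<0$ for all $r\in(r_{\rm e},r_{\rm c})\setminus\{r_0\}$ so that ``there is no $r$ with $R>0$ between the horizons''---is false, and the gap it hides is precisely the nontrivial core of the theorem. The two pieces of $R$ pull in opposite directions: the first term $\bigl(\tfrac{abL}{r_0^2+a^2}\bigr)^2\bigl(r^2-r_0^2\bigr)^2$ is nonnegative with a double zero at $r_0$ but strictly positive elsewhere, while the second term $-\mu(r)\bigl(Q+b^2(L-aE)^2\bigr)$ is negative on $(r_{\rm e},r_{\rm c})$ but \emph{vanishes at the horizons} because $\mu(r_{\rm e})=\mu(r_{\rm c})=0$. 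Hence $R>0$ near $r_{\rm e}$ and near $r_{\rm c}$ (when $r_0$ is not an endpoint) and $R<0$ near $r_0$, so $R$ changes sign at least twice; one cannot deduce the sign of the sum from the signs of the summands. The false claim also makes the dynamical conclusion incoherent: if $R<0$ everywhere then $\dot r^2\propto R\ge 0$ would have no solutions at all, so ``the sign of $\dot r$ is constant and $r$ is monotone, forced toward a horizon'' would describe geodesics that do not exist.

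What the argument actually needs, and what your outline omits, is that $\{R<0\}\cap(r_{\rm e},r_{\rm c})$ is a \emph{single} interval with $r_0$ in its closure, so that $R$ has at most two zeros in $(r_{\rm e},r_{\rm c})$ and there is no annular ``potential well'' away from $r_0$ in which a geodesic could bounce and remain trapped. Dividing by the positive $\mu$, this is equivalent to showing that $F(r):=\bigl((r^2+a^2)\xi_t+a\xi_\phi\bigr)^2/\mu(r)$ has $r_0$ as its \emph{only} critical point in $(r_{\rm e},r_{\rm c})$. This is exactly the input the paper draws from \cite[Theorem~3.2\,(a)]{PV2023}: $F$ has at most one critical point in $(r_{\rm e},r_{\rm c})$, and since $F\ge 0$ with $F(r_0)=0$, the sign of $F'$ is the sign of $r-r_0$. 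The paper packages this as the convexity statement $\H_\q r = 0 \Rightarrow \sgn\bigl(\H_\q^2 r\bigr)=\sgn(r-r_0)$ and builds an explicit escape function, a robust microlocal rephrasing of the turning-point picture you describe. Finally, the delicate endgame you anticipate at $r=r_0$ (a photon sphere and a linearization or instability argument) does not arise: by Remark~\ref{rmk: second assertion} the vector field $\T$ is timelike on a neighborhood of $\{r=r_0\}$, so no nonzero null covector orthogonal to $\T$ exists there at all, and the open set $\U$ comes for free without any analysis of hyperbolic fixed points.
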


\begin{Remark}
A special case of Theorem \ref{thm: asymptotic expansion}, namely when $r_0$ was the unique $r_0 \in (r_{\rm e}, r_{\rm c})$ such that $\mu'(r_0) = 0$,
was proven in \cite[Lemma~2.4]{PV2023} and was one of the two key observations in that paper.
\end{Remark}

\begin{Remark}[the second assertion in Theorem \ref{thm: T-orth trapping}] \label{rmk: second assertion}
The second assertion in Theorem \ref{thm: T-orth trapping} is an immediate consequence of the fact that $\T$ is timelike in a region of the form \eqref{eq: timelike region}.
To check this, we compute
\[ 
	g_*(\T,\T)|_{r = r_0}
		= - \frac{\mu(r_0)\big(r_0^2 + a^2 \cos^2(\theta)\big)}{b^2\big(r_0^2 + a^2\big)^2},
\]
which is negative if $r_0 \in (r_{\rm e}, r_{\rm c})$ and vanishes if $r_0 = r_{\rm e}$ or $r_{\rm c}$.
If $r_0 \in (r_{\rm e}, r_{\rm c})$, it follows that $\T$ is \emph{timelike} at
\begin{equation} \label{eq: causal vector}
	\R_{t_*} \times \{r_0\} \times S^2,
\end{equation}
and it is therefore timelike in an open neighborhood of \eqref{eq: causal vector}.
In this region, there can therefore be no lighlike geodesics with trajectories orthogonal to $\T$, proving the second assertion in Theorem~\ref{thm: T-orth trapping} in this special case.
If instead $r_0 = r_{\rm e}$ or $r_{\rm c}$, then $\T$ is lighlike at \eqref{eq: causal vector}, and we cannot immediately deduce that $\T$ will be causal in a neighborhood.
We therefore compute
\begin{align*}
	\d_r g_*(\T, \T)|_{r = r_{\rm e/c}}
		= \frac{-\mu'(r_{\rm e/c}) \big(r_{\rm e/c}^2 + a^2\cos^2(\theta)\big)}{b^2 \big(r_{\rm e/c}^2 + a^2\big)^2}.
\end{align*}
If now $r_0 = r_{\rm e}$, then the right hand side is negative, which implies that $\T$ is timelike in a subset of the form
\[
	\R_{t_*} \times (r_{\rm e}, r_{\rm e} + \gamma) \times S^2,
\]
for some $\gamma > 0$.
Similarly, if $r_0 = r_{\rm c}$, then $\T$ is timelike in a subset of the form
\[
	\R_{t_*} \times (r_{\rm c} - \gamma, r_{\rm c}) \times S^2,
\]
for some $\gamma > 0$.
This completes the proof of the second assertion in Theorem \ref{thm: T-orth trapping}.
\end{Remark}

\begin{Remark}[the new ergoregions] \label{rmk: time inv}
The computations in the previous remark raise the question whether we can choose $r_0 \in [r_{\rm e}, r_{\rm c}]$ such that $\T$ is \emph{timelike} everywhere in the domain of outer communication in the Kerr--de~Sitter spacetime, and lightlike at the horizons.
Let us compute the Lorentzian length of $\T$ at the horizons:
\begin{align*}
	g(\T,\T)|_{r = r_{\rm e}}
		= \frac{a^2 c(\theta)\sin^2(\theta)}{b^2\big(r^2 + a^2 \cos^2(\theta)\big)}\left(\frac{r_0^2-r_{\rm e}^2}{r_0^2 + a^2}\right)^2, \\
	g(\T,\T)|_{r = r_{\rm c}}
		= \frac{a^2 c(\theta)\sin^2(\theta)}{b^2\big(r^2 + a^2 \cos^2(\theta)\big)}\left(\frac{r_0^2-r_{\rm c}^2}{r_0^2 + a^2}\right)^2.
\end{align*}
In the Schwarzschild--de Sitter spacetime, when $a = 0$, both expressions actually vanish at the horizons and one can easily check that $\T$ is timelike at any $r \in (r_{\rm e}, r_{\rm c})$.
However, if $a \neq 0$, then these expressions only vanish if $r_0 = r_{\rm e}$ or $r_0 = r_{\rm c}$, respectively.
If $r_0 \in (r_{\rm e}, r_{\rm c})$, then both values are positive.
This shows that $\T$ is spacelike at least at one of the horizons.
By analogy with the classical terminology, we call the regions in the domain of outer communication, where $\T$ is spacelike, the \emph{ergoregions} with respect to $\T$.
These computations show that there are two ergoregions if $r_0 \in (r_{\rm e}, r_{\rm c})$, which are non-intersecting by the results in Remark \ref{rmk: second assertion}, and only one ergoregion if $r_0 = r_{\rm e}$ or $r_{\rm c}$.
As a comparison, with the classical choice $\d_t$ as the stationary Killing vector field, the two ergoregions intersect for large $a$ which is undesirable for the analysis.
\end{Remark}

\subsection{The second and third main results}\label{subsec:second result}
We begin with our assumptions:

\begin{Assumption} \label{ass: main wave} \
\begin{itemize}\itemsep=0pt
	\item Let $(M_*, g_*)$ be a subextremal Kerr--de~Sitter spacetime, extended over the future event horizon and the future cosmological horizon, where
\[
	M_* := \R_{t_*} \times (r_{\rm e} - \de, r_{\rm c} + \de)_r \times S^2_{\phi_*, \theta},
\]
with $\de > 0$ small enough so that the boundary hypersurfaces
\[
	\R_{t_*} \times \{r_{\rm e} - \de\} \times S^2_{\phi_*, \theta}, \qquad \R_{t_*} \times \{r_{\rm c} + \de\} \times S^2_{\phi_*, \theta}
\]
are \emph{spacelike} and with $f$ chosen as in {\rm \cite[Remark~1.1]{PV2023}} so that the hypersurfaces
\[
	\{t_* = c \} \times (r_{\rm e} - \de, r_{\rm c} + \de)_r \times S^2_{\phi_*, \theta}
\]
are \emph{spacelike}, for all $c \in \R$.
\item Let $A$ be a smooth complex function on $M_*$ such that
$
	\d_{t_*} A = \d_{\phi_*} A = 0$.
We let $P$ be the linear wave operator given by
$
	P = \Box + A$,
where $\Box$ denotes the d'Alembert operator on scalar functions on $M_*$.
\end{itemize}
\end{Assumption}

For any subset $\U \subset M_*$, we use the notation $C^\infty(\U)$ for the smooth complex functions on $\U$.
As in Theorem \ref{thm: T-orth trapping}, we let
\[
	\T
		:= \d_{t_*} + \frac{a}{r_0^2 + a^2} \d_{\phi_*},
\]
for any fixed $r_0 \in [r_{\rm e}, r_{\rm c}]$.

\subsubsection{Quasinormal modes}

One of the novelties in \cite{PV2023} was a new definition of quasinormal modes with respect to the Killing vector field $\T$, with $r_0 \in (r_{\rm e}, r_{\rm c})$ uniquely determined by the condition $\mu'(r_0) = 0$, instead of the standard choice of Killing vector field $\d_{t_*}$.
In this paper, we show that the analogous result holds as in \cite{PV2023}, if we choose to define our quasinormal modes with respect to $\T$, for any choice of $r_0 \in [r_{\rm e}, r_{\rm c}]$.

\begin{Definition}[quasinormal mode] \label{def: QNMs}
Let $r_0 \in [r_{\rm e}, r_{\rm c}]$.
A complex function $u \in C^\infty(M_*)$ is called a {\em quasinormal mode}, with {\em quasinormal mode frequency} $\s \in \C$, if $\T u = -{\rm i} \s u$
and $P u = 0$.
\end{Definition}

\begin{Remark}
Quasinormal modes and mode frequencies are also called resonant states and resonances.
\end{Remark}
\begin{Remark}
Note that we can write any quasinormal mode as
$
	u = {\rm e}^{-{\rm i} \s t_*}v_\s$,
where $\T v_\s = 0$.
\end{Remark}

Our second main result is the following:

\begin{Theorem}[discrete set of quasinormal modes] \label{thm: QNMs}
Let $(M_*, g_*)$ and $P$ be as in Assumption~{\rm\ref{ass: main wave}}.
Then there is a discrete set $\A \subset \C$ such that $\s \in \A$
if and only if there is a quasinormal mode~${u \in C^\infty(M_*)}$
with mode frequency $\s$.
Moreover, for each $\s \in \A$, the space of quasinormal modes is finite dimensional.
If the coefficients of $P$ are real analytic, then the quasinormal modes are real analytic.
\end{Theorem}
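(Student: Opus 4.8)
The plan is to realize quasinormal modes as elements of the kernel of a Fredholm operator obtained from $P$ by separating off the $\T$-dependence and applying the microlocal framework of Vasy. Writing $u = e^{-\mathrm{i}\s t_*} v$ with $\T v = 0$, the equation $Pu = 0$ becomes $P_\s v = 0$, where $P_\s$ is a holomorphic family (in $\s$) of operators on the ``slice'' manifold obtained by quotienting $M_*$ by the flow of $\T$; concretely this is $X := (r_{\rm e}-\de, r_{\rm c}+\de)_r \times S^2$. The key structural inputs are: (i) the coefficients $A$ and the metric $g_*$ are invariant under $\d_{t_*}$ and $\d_{\phi_*}$, hence under $\T$, so $P_\s$ is well-defined on $X$; (ii) the characteristic set of $P_\s$ has exactly the radial-point structure at the two horizons and the normally hyperbolic trapped set in between, so that the radial point estimates of \cite{V2013} together with the normally hyperbolic trapping estimates of \cite{WZ2011, Dya2015, Dya2015b, Dya2016} apply; (iii) crucially, by Theorem~\ref{thm: T-orth trapping} the $\T$-orthogonal lightlike geodesics are non-trapped and in fact avoid a neighborhood of $r = r_0$ entirely, which is exactly what is needed for the microlocal estimates to close with this non-standard choice of stationary Killing field — this is where the present paper's new observation enters.

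The steps, in order, are: \textbf{(1)} Set up $X$ and the reduced operator $P_\s$, verifying it is a second-order operator with principal symbol given by the dual metric restricted to $\T$-orthogonal covectors, and that the dependence on $\s$ is holomorphic (indeed polynomial of degree $2$). \textbf{(2)} Identify the relevant phase-space structure of $P_\s$: elliptic away from the characteristic set; radial points (sources/sinks) over the event and cosmological horizons, coming from the dilation structure in the $r$-variable near $r_{\rm e}, r_{\rm c}$; and the normally hyperbolic trapped set, which by Theorem~\ref{thm: T-orth trapping} is in fact \emph{absent} among $\T$-orthogonal null geodesics — more precisely, one checks the subprincipal/threshold conditions at the radial points and the dynamical hypotheses of normally hyperbolic trapping hold for this $\T$. \textbf{(3)} Choose variable-order Sobolev spaces $\mathcal{X}^s$, $\mathcal{Y}^{s-1}$ on $X$ adapted to these radial points (order above threshold at the sink, below at the source), and invoke the radial point propagation estimates plus real-principal-type propagation plus trapping estimates to conclude $P_\s : \mathcal{X}^s \to \mathcal{Y}^{s-1}$ is Fredholm of index $0$ for $\s$ in a suitable half-plane $\Im\s > -C$ (the constant $C$ governed by the trapping, or unrestricted in $\s$ if the trapping is genuinely absent). \textbf{(4)} Show $P_\s$ is invertible for some $\s$ (e.g., $\Im\s \gg 0$, via an energy estimate using that the constant-$t_*$ slices are spacelike), so the holomorphic Fredholm theorem (analytic Fredholm theory) gives that $P_\s^{-1}$ is a meromorphic family, with poles forming a discrete set $\A \subset \C$, and at each pole the space of solutions is finite-dimensional. \textbf{(5)} Identify $\A$ and the kernels with quasinormal mode frequencies and modes: membership in the kernel in $\mathcal{X}^s$ forces, by elliptic regularity off the characteristic set and radial point regularity propagation at the horizons, that $v \in C^\infty(X)$, hence $u \in C^\infty(M_*)$; conversely any smooth quasinormal mode lies in every $\mathcal{X}^s$. \textbf{(6)} For the analyticity statement, upgrade the smoothness of $v$ to real-analyticity when the coefficients are real-analytic: the horizons are non-characteristic-type real-analytic hypersurfaces and the only obstruction to analytic hypoellipticity is the trapped set, which here is empty along $\T$-orthogonal directions; alternatively, cite the analytic propagation/regularity results used for the analogous statement in \cite{PV2023}.

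The main obstacle I expect is \textbf{step (2)–(3)}: verifying that with the non-standard $\T = \d_t + \frac{a}{r_0^2+a^2}\d_\phi$ (and in the degenerate cases $r_0 = r_{\rm e}$ or $r_0 = r_{\rm c}$, where $\T$ becomes one of the horizon Killing fields and an ergoregion disappears) the reduced operator $P_\s$ still has the correct radial point dynamics at \emph{both} horizons — in particular that $\T$ remains transversal in the right sense to both $\Ho_e^+$ and $\Ho_c^+$ so that the source/sink structure and the threshold regularity conditions are unaffected — and that the complex absorbing / non-trapping structure in the interior genuinely follows from Theorem~\ref{thm: T-orth trapping} rather than merely the weaker normally-hyperbolic-trapping estimate. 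Once the phase space picture is confirmed to match that of \cite{V2013, PV2023}, the remaining analytic Fredholm and meromorphic continuation argument is essentially formal. A secondary technical point is handling the case $r_0 \in \{r_{\rm e}, r_{\rm c}\}$ uniformly with the interior case, since then $\T$ is only lightlike (not timelike) on the slice $\{r = r_0\}$ and one must use the first-derivative computation of Remark~\ref{rmk: second assertion} to locate the region $\U$ that the $\T$-orthogonal null geodesics avoid.
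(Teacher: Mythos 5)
Your proposal is correct and follows essentially the same route as the paper: reduce to the reduced operator $P_\s$ on the slice, establish Fredholmness of index $0$ via the radial-point structure at the horizons and the non-trapping of $\T$-orthogonal null geodesics from Theorem~\ref{thm: T-orth trapping}, invert by an energy estimate for $\Im\s\gg 1$, apply analytic Fredholm theory to get a discrete set of poles, and cite the analytic propagation argument of \cite{PV2021} for real analyticity of the modes. One small clarification: normally hyperbolic trapping plays no role in this theorem — it enters only in the semiclassical (large $|\s|$) estimates needed for Theorem~\ref{thm: asymptotic expansion}; for the discreteness of quasinormal modes the classical Fredholm theory together with the standard energy estimate at $\Im\s\gg 1$ suffices, and the only dynamical input is the non-trapping of $\T$-orthogonal geodesics and the (possibly only generalized) radial source/sink structure at $N^*\{r=r_{\rm e/c}\}$, which the paper handles in Proposition~\ref{prop: classical radial points} — including the degenerate cases $r_0\in\{r_{\rm e},r_{\rm c}\}$, where the flow becomes exactly radial.
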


\noindent
This result generalizes \cite[Theorem~1.5]{PV2023} by allowing us to choose any $r_0 \in [r_{\rm e}, r_{\rm c}]$ in the definition of quasinormal modes, instead of only the unique one such that $\mu'(r_0) = 0$.
For more comments on the statement of Theorem~\ref{thm: QNMs}, we therefore refer to the discussion following \cite[Theorem~1.5]{PV2023}.

\subsubsection{Asymptotic expansion}

The last main result concerns the asymptotics of solutions to linear wave equations when $t_* \to \infty$.
We formulate the statement using the standard Sobolev spaces on
\[
	M_*
		= \R_{t_*} \times (r_{\rm e} - \de, r_{\rm c} + \de)_r \times S^2_{\phi_*, \theta},
\]
i.e., the ones associated with the Riemannian metric $\md t_*^2 + \md r^2 + g_{S^2}$,
where $g_{S^2}$ is the round metric on the 2-sphere.
For non-negative integers $s$, a Sobolev norm (unique up to equivalence) is given by
\[
	\|u\|_{\bar H^s}^2=\sum_{i + j + k \leq s} \bigl\|\partial_{t_*}^i \partial_r^j \left( \Delta_{S^2}+1\right)^{k/2} u\bigr\|^2_{L^2(M_*)}.
\]
The bar over $H$ corresponds to H\"ormander's notation for
extendible distributions, see \cite{HorIII}.
We have the following statement.

\begin{Theorem}[the asymptotic expansion of waves] \label{thm: asymptotic expansion} Let $(M_*, g_*)$ and $P$ be as in Assumption~{\rm\ref{ass: main wave}} and let $t_0 \in \R$.
There are $C, \de > 0$ such that for $0 < \e < C$ and $s > \frac12 + \b \e$, where
\[
	\b
		:= 2b \max_{r \in \{r_{\rm e}, r_{\rm c}\}} \left( \frac{r^2 + a^2}{\abs{\mu'(r)}} \right),
\]
any solution to $Pu = f$
with $f \in {\rm e}^{- \e t_*} \bar H^{s - 1 + \de}(M_*)$ and with $\supp(u) \cup \supp(f) \subset \{t_* > t_0\}$ has an asymptotic expansion
\begin{align*}
	u - \sum_{j = 1}^N \sum_{k = 0}^{k_j} t_*^k {\rm e}^{-{\rm i} \s_j t_*} v_{jk} \in {\rm e}^{- \e t_*} \bar H^s(M_*),
\end{align*}
where $\s_1, \hdots, \s_N$ are the $($finitely many$)$ quasinormal mode frequencies with $\Im \s_j > - \e$
and~$k_j$ is their multiplicity, and where ${\rm e}^{-{\rm i}\s_j t_*}v_{jk}$ are
the $C^\infty$ $($generalized$)$ quasinormal modes with frequency $\s_j$ which are
real analytic if the coefficients of $P$ are such.
\end{Theorem}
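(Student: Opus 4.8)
The plan is to follow the strategy of \cite{V2013} and \cite{PV2023}: pass to the stationary problem by a Fourier--Laplace transform adapted to $\T$, produce a meromorphic Fredholm inverse of the resulting spectral family, and recover the expansion by deforming the inversion contour, the residues furnishing the quasinormal mode terms. Concretely, since $\supp(u)\cup\supp(f)\subset\{t_*>t_0\}$, the transform $\tilde u(\s)=\int {\rm e}^{{\rm i}\s t_*}u\,\md t_*$ converges and is holomorphic for $\Im\s$ large, with values in the spatial Sobolev spaces on $X:=(r_{\rm e}-\de,r_{\rm c}+\de)_r\times S^2$; the weight ${\rm e}^{-\e t_*}$ corresponds to shifting the inverse-transform contour to $\Im\s=-\e$. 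Conjugating $P$ gives a holomorphic family $\widehat P(\s)$ of operators on $X$ with $\widehat P(\s)\tilde u(\s)=\tilde f(\s)$, and the relation $\T u=-{\rm i}\s u$ of Definition~\ref{def: QNMs} identifies $\s$ as the natural spectral parameter.

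Next I would set up the variable-order Sobolev spaces $\mathcal X^s\to\mathcal Y^{s-1}$ on $X$, adapted to the conormal bundles of the horizon spheres $\{r_{\rm e}\}\times S^2$ and $\{r_{\rm c}\}\times S^2$ in $T^*X$ and to the spacelike artificial boundaries at $r=r_{\rm e}-\de$ and $r=r_{\rm c}+\de$, and verify the microlocal package that makes $\widehat P(\s)$ Fredholm of index zero: elliptic regularity off $\Char(\widehat P(\s))$, real principal-type propagation along the null bicharacteristics, and the radial-point estimates at the two horizons. The threshold in the radial estimates is where $\e$ and $\b=2b\max_{r\in\{r_{\rm e},r_{\rm c}\}}\frac{r^2+a^2}{\abs{\mu'(r)}}$ enter: the subprincipal symbol at each horizon radial set is affine in $\s$, with coefficient governed by $\mu'(r_{\rm e})$, resp.\ $\mu'(r_{\rm c})$ (the surface gravities), and evaluating on the line $\Im\s=-\e$ yields exactly the requirement $s>\tfrac12+\b\e$. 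The decisive point --- and the only place the argument genuinely changes relative to using $\d_{t_*}$ --- is that, by Theorem~\ref{thm: T-orth trapping}, the $\T$-orthogonal null geodesics all leave the region between the horizons; these are precisely the geodesics governing the rescaled Hamilton flow on $\Char(\widehat P(\s))$ for $\s$ near the real axis, so in that regime there is no trapped set and the flow is structurally as in the $a=0$ case, with no trapping estimate needed. Together with invertibility of $\widehat P(\s)$ for $\Im\s\gg0$ and analytic Fredholm theory, this yields that $\widehat P(\s)^{-1}$ is meromorphic with pole set exactly the discrete set $\A$ of Theorem~\ref{thm: QNMs}, each pole of finite rank.

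Finally I would write $u=\frac1{2\pi}\int_{\Im\s=C_0}{\rm e}^{-{\rm i}\s t_*}\widehat P(\s)^{-1}\tilde f(\s)\,\md\s$ for $C_0$ large and push the contour down to $\Im\s=-\e$, picking up a residue at each pole $\s_j$ with $\Im\s_j>-\e$: this residue is a finite sum $\sum_{k=0}^{k_j}t_*^k{\rm e}^{-{\rm i}\s_j t_*}v_{jk}$, the powers of $t_*$ coming from the order of the pole, with the $v_{jk}$ the generalized quasinormal modes, real analytic when the coefficients of $P$ are, by the same regularity argument as in Theorem~\ref{thm: QNMs}; the leftover integral over $\Im\s=-\e$ lies in ${\rm e}^{-\e t_*}\bar H^s(M_*)$. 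The main obstacle is the justification of this deformation, which requires polynomial resolvent bounds $\norm{\widehat P(\s)^{-1}}_{\mathcal Y^{s-1}\to\mathcal X^s}\le C'\abs{\s}^N$ uniformly in the strip $-\e\le\Im\s\le C_0$ as $\abs{\Re\s}\to\infty$: these are the high-energy estimates, obtained by rescaling $h=\abs{\s}^{-1}$ so that $\widehat P(\s)$ becomes a semiclassical operator and patching together the semiclassical elliptic, propagation and radial-point estimates with the microlocal estimates near the genuine normally hyperbolic trapping of \cite{WZ2011, Dya2015}. It is this trapping --- which persists at high energy irrespective of the choice of $\T$ --- that caps the decay rate, forcing $\e<C$ with $C$ fixed by the minimal normal expansion rate at the trapped set (up to a factor involving $\b$), while the extra $\de$ derivatives on $f$ are spent to make the shifted integral converge absolutely. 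With these bounds in hand the deformation is legitimate and the stated expansion follows; since Theorem~\ref{thm: T-orth trapping} supplies the one new microlocal input, the remainder of the argument is that of \cite{PV2023} (cf.\ \cite{V2013}) essentially verbatim.
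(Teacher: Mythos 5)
Your proposal correctly reconstructs the argument, which the paper itself gives only in outline by citing \cite[Theorem~1.4]{V2013} and \cite[Theorem~3.2]{PV2023}: the Fredholm family $P_\s$ (resting on the non-trapping of Theorem~\ref{thm: T-orth trapping} and the radial-point thresholds governed by $\b$), high-energy resolvent bounds via semiclassical estimates near the genuine normally hyperbolic trapping, and contour deformation with the residues supplying the quasinormal-mode terms. One minor slip: the spaces here are the \emph{fixed}-order extendible Sobolev spaces $\bar H^s$ on a compact slice with spacelike artificial boundaries (not variable-order spaces, which would be needed only in the asymptotically flat $\Lambda=0$ case), with the single constraint $s>\tfrac12+\b\e$ covering both horizon radial sets.
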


Analogously to above, this result generalizes \cite[Theorem~1.6]{PV2023} by allowing us to choose any~${r_0 \in [r_{\rm e}, r_{\rm c}]}$ in the definition of quasinormal modes, instead of only the unique one such that $\mu'(r_0) = 0$.
For more comments on the statement of Theorem~\ref{thm: asymptotic expansion}, we therefore refer to the discussion following \cite[Theorem~1.6]{PV2023}.

Theorem \ref{thm: asymptotic expansion} is naturally combined with mode stability results, i.e.~statements saying that under suitable assumptions there cannot be any modes with $\Im(\s) \geq 0$,
except certain geometric modes where $\s = 0$.
Indeed, in view of Theorem~\ref{thm: asymptotic expansion}, mode stability would imply exponential decay to the zero mode.
One such statement, for the standard d'Alembert wave operator $\Box$, was recently proven by Hintz in \cite{H2021}.
Combining \cite[Theorem~1.1]{H2021} with Theorem~\ref{thm: asymptotic expansion} gives the following decay statement for a certain range of Kerr--de~Sitter parameters $\Lambda$, $a$ and $m$:

\begin{Corollary}
Let $(M_*, g_*)$ be as in Assumption {\rm\ref{ass: main wave}} and let $t_0 \in \R$.
Assume that $\frac {\abs a}m < 1$.
Then there is a $\gamma > 0$ such that if $\Lambda m^2 < \gamma$, then there are $C, \de > 0$ such that for $0 < \e < C$ and $s > \frac12 + \b \e$,
where
\[
	\b
		:= 2b \max_{r \in \{r_{\rm e}, r_{\rm c}\}} \left( \frac{r^2 + a^2}{\abs{\mu'(r)}} \right),
\]
any solution to $\Box u = f$
with $f \in {\rm e}^{- \e t_*} \bar H^{s - 1 + \de}(M_*)$ and with $\supp(u) \cup \supp(f) \subset \{t_* > t_0\}$ satisfies
\begin{align*}
	u - c \in {\rm e}^{- \e t_*} \bar H^s(M_*),
\end{align*}
for some constant $c \in \C$.
\end{Corollary}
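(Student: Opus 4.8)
The plan is to feed Theorem~\ref{thm: asymptotic expansion} into the mode stability theorem of Hintz. First, the operator $\Box$ fits Assumption~\ref{ass: main wave} with $A = 0$ (which trivially satisfies $\d_{t_*}A = \d_{\phi_*}A = 0$), so Theorem~\ref{thm: asymptotic expansion} provides constants $C_0, \de > 0$ such that for $0 < \e < C_0$ and $s > \frac12 + \b\e$ any $u$ as in the statement admits the expansion
\[
	u - \sum_{j = 1}^N \sum_{k = 0}^{k_j} t_*^k {\rm e}^{-{\rm i} \s_j t_*} v_{jk} \in {\rm e}^{- \e t_*} \bar H^s(M_*),
\]
where $\s_1, \dots, \s_N$ are the finitely many quasinormal mode frequencies with $\Im\s_j > -\e$. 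It remains to identify these frequencies.

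The key input is Hintz's result \cite[Theorem~1.1]{H2021}: if $\frac{\abs a}{m} < 1$, then there is $\gamma > 0$ such that whenever $\Lambda m^2 < \gamma$, the only resonance of $\Box$ with nonnegative imaginary part is $\s = 0$, this resonance is simple, and its space of resonant states consists precisely of the constant functions. One has to match this with Definition~\ref{def: QNMs}, where quasinormal modes are taken with respect to $\T = \d_{t_*} + \frac{a}{r_0^2 + a^2}\d_{\phi_*}$ for an arbitrary $r_0 \in [r_{\rm e}, r_{\rm c}]$. To this end I would decompose a quasinormal mode $u$ with $\T u = -{\rm i}\s u$ into azimuthal components $u_n$, $\d_{\phi_*}u_n = {\rm i}n u_n$ with $n \in \Z$: each nonzero $u_n$ solves $\Box u_n = 0$ and satisfies $\d_{t_*}u_n = -{\rm i}\bigl(\s + \frac{an}{r_0^2 + a^2}\bigr)u_n$, so it is a quasinormal mode in the $\d_{t_*}$-sense with frequency differing from $\s$ by the \emph{real} number $\frac{an}{r_0^2 + a^2}$. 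Hence the imaginary part of the frequency is unchanged, so if $\Im\s \geq 0$ then Hintz's theorem forces each $u_n$ to be a constant multiple of the constant function; therefore $u_n = 0$ for $n \neq 0$ and $u$ is constant with $\s = 0$. Conversely the constant function is a quasinormal mode with frequency $0$ for every $r_0$. Thus, in the present setting, the only quasinormal mode frequency with $\Im\s \geq 0$ is $\s = 0$, with multiplicity $k_j = 0$ and resonant state a constant. This reconciliation step --- making sure Hintz's formulation, which uses its own time function and may separate azimuthal modes, says exactly this about Definition~\ref{def: QNMs}, including the absence of a $t_*$-polynomial at $\s = 0$ --- is the main point to get right; here one also uses that quasinormal modes are real analytic and determined by their restriction to the domain of outer communication (Theorem~\ref{thm: QNMs}), so the notion is insensitive to the particular extension over the horizons used in \cite{H2021}.

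Finally, by Theorem~\ref{thm: QNMs} the set $\A$ of quasinormal mode frequencies is discrete, and only finitely many of its elements have $\Im\s > -C_0$. All of these other than $\s = 0$ have strictly negative imaginary part, so their imaginary parts are bounded above by some $-2\e_0 < 0$. Set $C := \min(C_0, \e_0)$. For $0 < \e < C$ and $s > \frac12 + \b\e$, every frequency in the expansion above has $\Im\s_j > -\e > -2\e_0$, which leaves only $\s = 0$; since its multiplicity is $k_j = 0$ and the corresponding generalized quasinormal mode is a constant $c \in \C$, the expansion reads $u - c \in {\rm e}^{-\e t_*}\bar H^s(M_*)$, which is the claim.
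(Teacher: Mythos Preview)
Your argument is correct and follows exactly the approach the paper indicates: the Corollary is stated without a separate proof, simply as the combination of Theorem~\ref{thm: asymptotic expansion} with Hintz's mode stability \cite[Theorem~1.1]{H2021}. You have filled in the details of that combination carefully, in particular the reconciliation between quasinormal modes in the sense of Definition~\ref{def: QNMs} (relative to $\T$) and the $\d_{t_*}$-modes of \cite{H2021} via azimuthal decomposition, a point the paper leaves entirely implicit; one small imprecision is that the finiteness of frequencies in the strip $\Im\s > -C_0$ is not a consequence of the mere discreteness in Theorem~\ref{thm: QNMs} but is part of the content of Theorem~\ref{thm: asymptotic expansion} (ultimately the high-energy estimates), so you should cite the latter for that step.
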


\section{The T-orthogonal lightlike geodesics}

The goal of this section is to prove Theorem \ref{thm: T-orth trapping}.
For this, let $r_0 \in [r_{\rm e}, r_{\rm c}]$
and
\[
	\T
		= \d_t + \frac a{r_0^2 + a^2} \d_\phi
\]
throughout the section.
When computing properties of lightlike geodesics, we are going to use the Hamiltonian formalism.
The Hamiltonian for geodesics is the metric $G$ dual to $g$, considered as a function on the cotangent bundle of $M$:
\begin{align*}
	G\colon\ T^*M
		\to T^*M, \qquad
	\xi
		\mapsto G(\xi, \xi).
\end{align*}
The dual metric $G$ of $g$ is given in Boyer--Lindquist coordinates by
\begin{gather}
	\big(r^2 + a^2 \cos^2(\theta)\big)G(\xi, \xi)\nonumber\\
		\qquad= \mu(r) \xi_r^2 + \frac{b^2}{c(\theta)\sin^2(\theta)}\big(a \sin^2(\theta)\xi_t + \xi_\phi\big)^2 - \frac{b^2}{\mu(r)} \big(\big(r^2 + a^2\big) \xi_t + a \xi_\phi \big)^2 + c(\theta) \xi_\theta^2,
 \label{eq: full Hamiltonian}
\end{gather}
where $\xi = (\xi_t, \xi_r, \xi_\phi, \xi_\theta)$ are the dual coordinates to $(t, r, \phi, \theta)$.
Since the bicharacteristic flow is invariant under conformal changes, we study from now on the Hamiltonian
\[
	\q(\xi)
		:= \big(r^2 + a^2 \cos^2(\theta)\big)G(\xi, \xi),
\]
given in \eqref{eq: full Hamiltonian}.
Since $\d_t$ and $\d_\phi$ are Killing vector fields, the dual variables $\xi_t$ and $\xi_\phi$ will be constant along the Hamiltonian flow.
Note that a vector $v \in TM$ is orthogonal to $\T$ if and only if the metric dual vector $\xi
		:= g(v, \cdot)$ satisfies
\begin{equation} \label{eq: F vanishing}
	0
		= g(v, T)
		= \xi(T)
		= \xi_t + \frac a{r_0^2 + a^2} \xi_\phi.
\end{equation}
We may now prove Theorem \ref{thm: T-orth trapping}.

\begin{proof}[Proof of Theorem \ref{thm: T-orth trapping}]
The main step in the proof is to show a convexity property for the radial function $r$ along the bicharacteristic flow in the domain of outer communication.
More precisely, we want to prove that
\begin{equation}\label{eq: convexity in r}
	\H_\q r
		= 0 \quad \Rightarrow \quad \sgn\big( \H_\q^2 r \big) = \sgn(r - r_0)
\end{equation}
at all points in the characteristic set in the domain of outer communication.
Here, the Hamiltonian vector field is given by
\[
	\H_{\q}
		:= \sum_{j = 1}^4 (\d_{\xi_j}\q)\d_j - (\d_j \q)\d_{\xi_j}.
\]
We compute $\H_{\q} r
		= 2 \mu(r) \xi_r$.
Assuming that $\H_{\q} r = 0$ at some $r \in (r_{\rm e}, r_{\rm c})$, we conclude that~${\xi_r = 0}$ there.
The second derivative along the Hamiltonian flow at such a point is given by
\begin{align}
	\H_{\q}^2 r|_{\xi_r = 0}
		&= 2 \mu(r) \H_{\q} \xi_r|_{\xi_r = 0} = - 2 \mu(r) \d_r \left( - \frac{b^2}{\mu(r)} \big( \big(r^2 + a^2\big) \xi_t + a \xi_\phi \big)^2 \right) \nonumber\\
		&= 2 \mu(r) b^2 \d_r \frac{\big( \big(r^2 + a^2\big) \xi_t + a \xi_\phi \big)^2}{\mu(r)}.\label{eq: second derivative Hamiltonian}
\end{align}
Recall that $\mu(r) > 0$ at all points in the domain of outer communication.
By \cite[Theorem~3.2\,(a)]{PV2023}, the function
\[
	F(r)
		:= \frac{\big( \big(r^2 + a^2\big) \xi_t + a \xi_\phi \big)^2}{\mu(r)}
\]
either vanishes at $r_{\rm e}$ or $r_{\rm c}$ and has no critical point in $(r_{\rm e}, r_{\rm c})$ or has precisely one critical point in $(r_{\rm e}, r_{\rm c})$.
Since $F$ is a non-negative function and vanishes at $r_0$ by \eqref{eq: F vanishing}, $F$ can have no other critical points than $r_0$.
It follows that $F'(r)$, and therefore the right hand side of \eqref{eq: second derivative Hamiltonian}, has the signs claimed in \eqref{eq: convexity in r}.

We now define an escape function $\E := {\rm e}^{C(r - r_0)^2}\H_{\q_\s} r$
for any $C > 0$ and note that
\[
	\H_{\q_\s} \E
		= {\rm e}^{C(r - r_0)^2} \big( 2 C(r - r_0) (\H_{\q_{\s}}r)^2 + \H_{\q_\s}^2 r \big).
\]
Since the characteristic set is disjoint from $\{r = r_0\}$, by Remark \ref{rmk: second assertion}, and since $\H_\q^2 r$ has the same sign as $r - r_0$ whenever $\H_\q r$ vanishes, by the implication \eqref{eq: convexity in r}, and since the Hamiltonian flow is invariant under translations in $\R_{t_*}$, we can choose the constant $C$ large enough to make sure that $\H_{\q_\s} \E$ is nowhere vanishing on
\[
	\R_{t_*} \times [r_{\rm e} + \e, r_{\rm c} - \e] \times S^2
\]
and has the same sign as $r - r_0$.
Hence $\E$ gives an escape function for all bicharacteristics satisfying \eqref{eq: F vanishing}.
This finishes the proof of Theorem \ref{thm: T-orth trapping}.
\end{proof}

\section{Fredholm theory}

The purpose of this section is to prove Theorems \ref{thm: QNMs} and \ref{thm: asymptotic expansion}.
For this, let again $r_0 \in [r_{\rm e}, r_{\rm c}]$
and
\[
	\T
		= \d_{t_*} + \frac a{r_0^2 + a^2} \d_{\phi_*}
\]
throughout the section.
The theory of wave equations in \cite{V2013} is based on first Fourier transforming the wave operator in the variable $t_*$.
We thus want to consider the induced operator
\[
	P_\s v
		:= {\rm e}^{{\rm i}\s t_*} P({\rm e}^{-{\rm i} \s t_*} v),
\]
for a fixed $\s \in \C$, where
$\T v
		= 0$.
This latter condition can be interpreted as $v$ being only dependent on a certain set of coordinates, cf.\ \cite[equation~(11)]{PV2023}, but this viewpoint is not necessary for the discussion here.
Since $\T$ is a Killing vector field, and the coefficients of $P$ are invariant under $\T$ (cf.~Assumption \ref{ass: main wave}), we can think of $P_\s$ as a differential operator
\[
	P_\s
		\colon\
 C^\infty(L_*) \to C^\infty(L_*),
\]
where
\[
	L_*
		:= t_*^{-1}(0) \subset M_*
\]
is a spacelike hypersurface.
Now, since $\s \in \C$ is fixed, the bicharacteristic flow of $P_\s$ is canonically identified with the lightlike geodesics in $M_*$ with trajectories orthogonal to $\T$.
Thus, we already know by Theorem \ref{thm: T-orth trapping} that the bicharacteristic flow of $P_\s$ is non-trapping in the domain of outer communication.
Following \cite{V2013}, we want to show that $P_\s$ is in fact a Fredholm operator between appropriate Sobolev spaces.
For any $s \in \R$, we write
$ \bar H^s := \bar H^s(L_*)$,
for the space of extendible Sobolev distributions on $L_*$, in the sense of H\"ormander \cite{HorIII}, of degree $s$.
The following is an improvement over \cite[Lemma~2.1]{PV2023} and \cite[Theorem~1.1]{V2013}, where we allow to choose any $r_0 \in [r_{\rm e}, r_{\rm c}]$ in the definition of $\T$.

\begin{Theorem} \label{thm: Fredholm}
Define
\[
	\b
		:= 2b \max_{r \in \{r_{\rm e}, r_{\rm c}\}} \left( \frac{r^2 + a^2}{\abs{\mu'(r)}} \right),
\]
and let $s \geq \frac12$.
The operator
\[
  P_\s\colon\ \big\{u \in \bar H^s \mid P_\s u \in \bar H^{s-1} \big\} \to \bar H^{s-1}
\]
is an analytic family of Fredholm operator of index $0$ for all $\s \in \C$ such that $\Im \s > \frac{1 - 2s}{2\b}$.
Moreover, $P_\s$ is invertible for $\Im(\s) \gg 1$.
\end{Theorem}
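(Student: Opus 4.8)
The plan is to follow the Fredholm framework of Vasy \cite{V2013}, verifying that the operator $P_\s$ fits into the microlocal setting required there, with the key improvement coming from Theorem~\ref{thm: T-orth trapping}. First I would analyze the structure of $P_\s$ as a second-order operator on the spacelike slice $L_*$: its principal symbol is (a conformal multiple of) the dual metric restricted to the annihilator of $\T$, and since the slices $\{t_*=c\}$ are spacelike, $P_\s$ is of \emph{real principal type} in the interior, with characteristic set canonically identified—as noted in the excerpt—with the $\T$-orthogonal lightlike cone. The boundary hypersurfaces $\R_{t_*}\times\{r_{\rm e}-\de\}\times S^2$ and $\R_{t_*}\times\{r_{\rm c}+\de\}\times S^2$ are spacelike, so near them $P_\s$ is (after the Fourier transform in $t_*$) of the form amenable to the ``complex absorbing'' or spacelike-boundary treatment in \cite{V2013}: one obtains hyperbolic estimates propagating energy out of the domain there, which is why no boundary conditions are needed and the spaces $\{u\in\bar H^s\mid P_\s u\in\bar H^{s-1}\}$ are the right ones.

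Next I would treat the two horizons $\Ho_e^+$ and $\Ho_c^+$, which become \emph{radial sets} for the rescaled Hamilton flow. The main point is the threshold condition: the radial point estimates of \cite{V2013} require the Sobolev order $s$ to lie above a threshold determined by the imaginary part of $\s$ and the surface gravities, i.e.\ the subprincipal/skew-symmetric term at the radial set. I would compute this threshold at $r=r_{\rm e}$ and $r=r_{\rm c}$; the relevant quantity is governed by $\mu'(r_{\rm e/c})$ (the surface gravity) together with the conformal factor $r^2+a^2\cos^2\theta$ and the factor $b$, and tracking the constants gives exactly the condition $\Im\s>\frac{1-2s}{2\b}$ with $\b=2b\max_{r\in\{r_{\rm e},r_{\rm c}\}}\bigl(\frac{r^2+a^2}{\abs{\mu'(r)}}\bigr)$. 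Here I expect to reuse the computation from \cite[Lemma~2.1]{PV2023}, noting that the choice of $r_0\in[r_{\rm e},r_{\rm c}]$ only shifts $\T$ by a multiple of the rotational Killing field $\d_{\phi_*}$, which commutes with everything and does not affect the radial point dynamics at the horizons; what changes is the behavior \emph{away} from the horizons.

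With the interior real-principal-type propagation, the spacelike boundary estimates, and the radial point estimates in hand, the crucial remaining ingredient—and this is the step that genuinely uses the new observation of this paper—is the absence of trapping. In \cite{V2013} and \cite{PV2023} one had to insert the delicate normally hyperbolic trapping estimates of Wunsch–Zworski and Dyatlov, which cost regularity and constrain the allowed half-plane. By Theorem~\ref{thm: T-orth trapping}, however, the rescaled bicharacteristic flow of $P_\s$ inside the domain of outer communication is \emph{non-trapping}: every null bicharacteristic orthogonal to $\T$ escapes to one of the horizons (or even avoids an open region near $r_0$). Hence I can string together the elementary propagation-of-singularities estimate along the non-trapping flow with the radial point estimates at both ends and the hyperbolic estimate at the spacelike boundary to obtain, for $s\ge\frac12$ and $\Im\s>\frac{1-2s}{2\b}$, an estimate of the form $\norm{u}_{\bar H^s}\le C\bigl(\norm{P_\s u}_{\bar H^{s-1}}+\norm{u}_{\bar H^{-M}}\bigr)$, and the dual estimate for the adjoint $P_\s^*$ on the dual spaces; together with the compactness of the inclusion $\bar H^s\hookrightarrow\bar H^{-M}$ this yields the Fredholm property, and a standard deformation/continuity argument in $\s$ gives index $0$. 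Analyticity in $\s$ follows since $P_\s$ is a holomorphic family of operators (polynomial in $\s$) with the estimates locally uniform in $\s$. Finally, for $\Im\s\gg1$ I would run the standard energy estimate: conjugating by ${\rm e}^{-{\rm i}\s t_*}$ and taking $\Im\s$ large makes the zeroth-order term in $P_\s$ dominate with a favorable sign (or, equivalently, one gets a coercive estimate with no loss), so $P_\s$ and $P_\s^*$ are both injective and hence $P_\s$ is invertible.

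The step I expect to be the main obstacle is the bookkeeping at the radial points: one must verify that the \emph{sink/source} structure of the radial sets at $\Ho_e^+$ and $\Ho_c^+$ is not spoiled by the new choice of $\T$, and extract the precise threshold constant $\b$. Concretely, one has to check the sign of the ``radial'' component of the rescaled Hamilton vector field at $r=r_{\rm e/c}$—which determines whether the estimate propagates towards or away from the radial set and hence whether $s$ must be above or below threshold—and confirm that shifting $\d_t\mapsto\d_t+\frac{a}{r_0^2+a^2}\d_\phi$ leaves these signs unchanged (it does, because at the horizons $\d_{\phi_*}$ is tangent to the horizon and the extra term contributes only to $\xi_{\phi_*}$, a conserved quantity, not to $\d_r\q$ at $\xi_r=0$). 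Once this is confirmed, the rest is an assembly of known microlocal estimates, with the non-trapping input from Theorem~\ref{thm: T-orth trapping} replacing—and simplifying—the trapping analysis that was previously required.
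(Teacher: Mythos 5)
Your proposal follows essentially the same route as the paper: establish the dynamical picture for the bicharacteristics of $P_\s$ (non-trapping in the interior from Theorem~\ref{thm: T-orth trapping}, radial sets at the conormal bundles of the horizons with thresholds governed by the surface gravities, spacelike artificial boundaries) and then invoke the Fredholm framework of~\cite{V2013}. The paper does exactly this via Proposition~\ref{prop: classical radial points} and a citation to \cite[Theorem~1.4]{V2013}.

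Two small corrections to your framing, neither of which breaks the argument. First, when $r_0\in\{r_{\rm e},r_{\rm c}\}$ the radial point structure at the corresponding horizon is \emph{not} simply unchanged: the paper shows by an explicit computation of $\H_{\p_\s}$ on $N^*\{r=r_{\rm e}\}$ (for $r_0=r_{\rm e}$) that the flow there becomes \emph{exactly} radial rather than generalized radial, and that the associated ergoregion disappears entirely so no bicharacteristics between $N^*\{r=r_{\rm e}\}$ and $\{r=r_{\rm e}-\de\}$ enter $M$; this is why the boundary cases $r_0=r_{\rm e}$, $r_{\rm c}$ get separate treatment in Proposition~\ref{prop: classical radial points}, whereas you waved this away by saying the shift ``leaves these signs unchanged.'' Second, your remark that the Wunsch--Zworski/Dyatlov normally hyperbolic trapping estimates were ``previously required'' for the Fredholm statement is a misattribution: those estimates enter only the semiclassical analysis behind the asymptotic expansion (Theorem~\ref{thm: asymptotic expansion}), not the fixed-$\s$ Fredholm property, and the semiclassical trapping of $P$ is still present here and handled via \cite[Theorem~3.2]{PV2023}. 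What Theorem~\ref{thm: T-orth trapping} buys for Theorem~\ref{thm: Fredholm} is that the \emph{classical} flow of $P_\s$ is globally non-trapping for the enlarged family of~$\T$, so the radial-point/propagation/hyperbolic estimates can be assembled for any $r_0\in[r_{\rm e},r_{\rm c}]$ — it extends the applicability of the framework rather than simplifying an existing trapping step.
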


The proof of Theorem \ref{thm: Fredholm} relies on the Fredholm theory for non-elliptic operators developed in \cite{V2013}, which requires refined understanding of the behavior of the bicharacteristics.
The number~${\b > 0}$ in Theorem \ref{thm: Fredholm} is related to the surface gravity of the horizons and corresponds to a~threshold in the radial point estimates in \cite{V2013}.
Note that there is a canonical identification
\[
	\Char(P_\s)
		\subset \Char(P)
		\subset T^*M_*.
\]
We may therefore define
\[
	\S_\pm
		= \{ \xi \in \Char(P_\s) \mid \pm G_*(\md t_*, \xi) > 0 \},
\]
and note that $\S_- \cap \S_+ = \varnothing$, which in particular implies that $\S_-$ and $\S_+$ are invariant under the bicharacteristic flow.
Moreover, since $L_*$ is spacelike by Assumption \ref{ass: main wave}, it follows that $\md t_*$ is timelike along $L_*$ and hence
\[
	\Char(P_\s)
		= \S_- \cup \S_+.
\]
Analogously to \cite[Lemma~2.5]{PV2023} and \cite[Section~6]{V2013}, we have the following.
\begin{Proposition} \label{prop: classical radial points}
Let $\xi_r
		:= \xi(\d_r)$,
for any $\xi \in \Char(P_\s)$.
The conormal bundles $N^*\{r = r_{\rm e}\}$ and $N^*\{r = r_{\rm c}\}$ are contained in the characteristic set of $P_\s$ and the bicharacteristic flow is radial in the generalized sense as in {\rm\cite{V2013}} at these.
All other bicharacteristics of $P_\s$ in $\S_+$ either start at fiber infinity of
\[
	N^*\{r = r_{\rm e}\} \cap \{\xi_r > 0\}
\]
and end at $r = r_{\rm e} - \de$ or start at the fiber infinity of
\[
	N^*\{r = r_{\rm c}\} \cap \{\xi_r < 0\}
\]
and end at $r = r_{\rm c} + \de$.
All other bicharacteristics of $P_\s$ in $\S_-$ either start at $r = r_{\rm e} - \de$ and end at fiber infinity of
\[
	N^*\{r = r_{\rm e}\} \cap \{\xi_r < 0\}
\]
or start at $r = r_{\rm c} + \de$ and end at the fiber infinity of
\[
	N^*\{r = r_{\rm c}\} \cap \{\xi_r > 0\}.
\]
Moreover, the fiber infinity of
\[
	N^*\{r = r_{\rm e}\} \cap \{\pm \xi_r > 0\} \text{ and } N^*\{r = r_{\rm c}\} \cap \{\mp \xi_r > 0\}
\]
are generalized normal source/sink manifolds of the bicharacteristic flow, respectively, in the sense of {\rm\cite{V2013}}.
Furthermore, if $r_0 = r_{\rm e}$, then no bicharacteristics between the fiber infinities of~${N^*\{r = r_{\rm e}\}}$ and $\{r = r_{\rm e} - \de\}$ intersect the domain of outer communication $M$ and
\[N^*\{r = r_{\rm e}\} \cap \{\pm \xi_r > 0\}\]
 is a stable radial point source/sink, in the sense of {\rm\cite{V2013}}.
If $r_0 = r_{\rm c}$, then the corresponding holds at the cosmological horizon.
\end{Proposition}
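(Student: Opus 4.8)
The plan is to reduce everything to an explicit analysis of the Hamiltonian $\q$ from \eqref{eq: full Hamiltonian} restricted to $\Char(P_\s)$, i.e.\ to the condition $\q(\xi) = 0$ together with the orthogonality constraint \eqref{eq: F vanishing} coming from $\T v = 0$. First I would rewrite $\Char(P_\s)$ in the $(t_*, \phi_*)$ coordinates: after the shift $t_* = t - \Phi(r)$, $\phi_* = \phi - \Psi(r)$, the dual variables transform by $\xi_r \mapsto \xi_r + \Phi'(r)\xi_t + \Psi'(r)\xi_\phi$ while $\xi_t, \xi_\phi$ are unchanged, so the principal symbol near the horizons becomes a quadratic form in which $\mu(r)\xi_r^2$ is replaced by something regular across $r_{\rm e}$ and $r_{\rm c}$. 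The conormal bundles $N^*\{r = r_{\rm e}\}$ and $N^*\{r = r_{\rm c}\}$ are then characterized by $\xi_t = \xi_\phi = 0$ (equivalently, the only surviving dual variable is $\xi_r$), and one checks directly that $\q$ vanishes there — this is exactly the statement that these conormal bundles lie in $\Char(P_\s)$. For the radial (in the generalized sense of \cite{V2013}) behavior, I would compute $\H_\q$ along these conormal bundles and show that the rescaled flow on $\partial\overline{T^*}M_*$ fixes these submanifolds, with the normal dynamics governed by $\mu'(r_{\rm e})$ and $\mu'(r_{\rm c})$; the sign of $\mu'$ (negative at $r_{\rm e}$, positive at $r_{\rm c}$ in the subextremal case) dictates which components are sources and which are sinks. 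This is essentially the computation in \cite[Section~6]{V2013} and \cite[Lemma~2.5]{PV2023}, so I would cite those and only indicate the changes.

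For the global structure of the remaining bicharacteristics, the key input is the convexity statement \eqref{eq: convexity in r} proved in the previous section: along any bicharacteristic in $\Char(P_\s)$ meeting the domain of outer communication, $r$ has no interior local maximum or minimum except possibly at $r_0$, and in fact (by Remark \ref{rmk: second assertion}) the characteristic set avoids $\{r = r_0\}$ entirely, so $r$ is strictly monotone along each such bicharacteristic in $(r_{\rm e}, r_{\rm c})$. Splitting $\Char(P_\s) = \S_+ \sqcup \S_-$ by the sign of $G_*(\md t_*, \xi)$ and using that $L_*$ is spacelike (so $\md t_*$ is timelike and the decomposition is exhaustive and flow-invariant), I would then track, on each piece, the sign of $\xi_r$ and of $\H_\q r = 2\mu(r)\xi_r$ (away from the horizons) to conclude that bicharacteristics in $\S_+$ flow from the "incoming" part of fiber infinity over a horizon, across $(r_{\rm e}, r_{\rm c})$, and out through the other horizon to $r = r_{\rm e} - \de$ or $r = r_{\rm c} + \de$; in $\S_-$ the flow runs the other way. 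That $\md t_*$ is timelike near the horizons (from the choice of $f$ in Assumption \ref{ass: main wave}) and that the lateral boundaries are spacelike ensures these endpoints are genuinely reached and that there is no trapping there. The source/sink character of the four pieces $N^*\{r = r_{\rm e/c}\} \cap \{\pm\xi_r > 0\}$ then follows by combining the radial point computation above with the monotonicity of $r$.

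The last assertion — that when $r_0 = r_{\rm e}$ the bicharacteristics between fiber infinity over $N^*\{r=r_{\rm e}\}$ and $\{r = r_{\rm e}-\de\}$ do not meet $M$, and the radial point there becomes a \emph{stable} (non-generalized) source/sink — is where the new choice $r_0 \in \{r_{\rm e}, r_{\rm c}\}$ really enters. Here I would use Remark \ref{rmk: second assertion}: for $r_0 = r_{\rm e}$, $\T$ is timelike on $\R_{t_*} \times (r_{\rm e}, r_{\rm e}+\gamma) \times S^2$, so there are \emph{no} $\T$-orthogonal lightlike vectors anywhere in that neighborhood of the event horizon inside $M$ — the characteristic set of $P_\s$ simply does not enter the domain of outer communication near $r_{\rm e}$. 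Consequently the only bicharacteristics near the event horizon live on $r \le r_{\rm e}$, i.e.\ in the extension region, where the analysis is that of an ordinary (stable) radial point: the "generalized" qualifier was only needed because, for $r_0 \in (r_{\rm e}, r_{\rm c})$, bicharacteristics could approach the conormal bundle while oscillating through the domain of outer communication. The symmetric statement at $r_{\rm c}$ when $r_0 = r_{\rm c}$ is identical with $r_{\rm e} \leftrightarrow r_{\rm c}$.

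I expect the main obstacle to be bookkeeping rather than a deep difficulty: carefully controlling the behavior of $\xi_r$ under the $t_* = t - \Phi(r)$, $\phi_* = \phi - \Psi(r)$ coordinate change so that $\mu(r)$ divides out correctly and the conormal bundles, radial dynamics, and source/sink signs all come out consistent across $r_{\rm e}$ and $r_{\rm c}$, while simultaneously matching the sign conventions of $G_*(\md t_*, \cdot)$ defining $\S_\pm$ with the sign of $\mu'$. Verifying that the endpoints $r = r_{\rm e} - \de$ and $r = r_{\rm c} + \de$ are actually attained (rather than the flow stalling or returning) uses the spacelike character of the lateral boundaries and is standard, but must be stated.
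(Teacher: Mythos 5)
Your overall strategy — cite \cite[Lemma~2.5]{PV2023} and \cite[Section~6]{V2013} for the case $r_0 \in (r_{\rm e}, r_{\rm c})$, use the convexity statement \eqref{eq: convexity in r} and the disjointness of $\S_\pm$ for the global structure, and invoke Remark~\ref{rmk: second assertion} for the claim that bicharacteristics between fiber infinity of $N^*\{r = r_{\rm e}\}$ and $\{r = r_{\rm e} - \de\}$ avoid $M$ when $r_0 = r_{\rm e}$ — is essentially the paper's. Two points need attention, however.

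First, a small slip: the conormal bundle $N^*\{r = r_{\rm e}\}$ in $T^*L_*$ (with $L_* = t_*^{-1}(0)$) is cut out by $\xi_{\phi_*} = \xi_\theta = 0$, not $\xi_t = \xi_\phi = 0$. Since $\xi_\theta$ appears quadratically in the principal symbol, forgetting the condition $\xi_\theta = 0$ would spoil the verification that $\q$ vanishes there and that the flow at the conormal bundle is correctly described.

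Second, and more substantively, your explanation of \emph{why} the radial point becomes stable (non-generalized) when $r_0 = r_{\rm e}$ does not actually work. Whether the flow at $N^*\{r = r_{\rm e}\}$ is radial or only ``radial in the generalized sense'' is a local statement about the Hamiltonian vector field at the conormal bundle, not a statement about where bicharacteristics near that set happen to travel. Concretely, after the coordinate change to $(t_*, \phi_*)$, the principal symbol of $P_\s$ contains a cross-term proportional to
\[
	f(r)\,\frac{r_0^2 - r^2}{r_0^2 + a^2}\,\xi_{\phi_*}\xi_r,
\]
whose $\xi_{\phi_*}$-derivative contributes a nonzero $\d_{\phi_*}$-component to $\H_{\p_\s}$ along $N^*\{r = r_{\rm e}\}$ when $r_0 \neq r_{\rm e}$; this drift in $\phi_*$ is precisely what forces the ``generalized'' qualifier. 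When $r_0 = r_{\rm e}$ the factor $r_0^2 - r^2$ vanishes identically at $r = r_{\rm e}$, the cross-term drops out, and one finds by direct computation that $\H_{\p_\s} = \big(r^2 + a^2 \cos^2\theta\big)^{-1}\mu'(r_{\rm e})\,\xi_r^2\,\d_{\xi_r}$ at $N^*\{r = r_{\rm e}\}$, which is exactly radial. Your heuristic -- that the generalized qualifier was needed because bicharacteristics could oscillate through the domain of outer communication -- conflates the local linearization at the conormal bundle with the global behavior of nearby orbits, and would not be accepted as a proof of the ``stable radial point'' claim. You need to do (or at least point to) this explicit computation of $\H_{\p_\s}$ restricted to the conormal bundle.
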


\begin{proof}
As explained above, Theorem \ref{thm: T-orth trapping} implies that no bicharacteristics of $P_\s$ are trapped in~${L_* \cap M}$.
Moreover, it implies that no bicharacteristics of $P_\s$ can approach the event horizon to the past/future and the cosmological horizon to the future/past.

Following the argument in the proof of \cite[Lemma~2.5]{PV2023}, the first two statements in the proposition would follow by combining Theorem \ref{thm: T-orth trapping} with the semiclassical considerations in \cite{V2013} near the horizons.
However, just like in the proof of \cite[Lemma~2.5]{PV2023}, we choose here to present the details for the convenience of the reader.
In the proof of \cite[Lemma~2.5]{PV2023}, the special case when $r_0 \in (r_{\rm e}, r_{\rm c})$ with $\mu'(r_0) = 0$ was considered.
The exact same argument as written there goes through, line by line, in the case when $r_0 \in (r_{\rm e}, r_{\rm c})$, using Theorem \ref{thm: T-orth trapping}.
Only the cases when $r_0 = r_{\rm e}$ or $r_0 = r_{\rm c}$ require some extra care.
Let us only discuss the case when $r_0 = r_{\rm e}$, since the other case is similar.
For the bicharacteristics starting or ending at $r = r_{\rm c} + \de$, the same analysis as in the proof of \cite[Lemma~2.5]{PV2023} applies.
The bicharacteristic flow at the event horizon is, however, slightly different.
Though a similar computation was already done in the proof of \cite[Theorem~1.7]{PV2021}, let us explicitly compute the Hamiltonian vector field at $N^*\{r = r_{\rm e}\}$ in case $r_0 = r_{\rm e}$.
The principal symbol $\p_\s$ of $P_\s$ is given by
\begin{align*}
	&\big(r^2 + a^2\cos^2(\theta)\big)\p_\s(\xi) \\
		&\qquad= \mu(r) \xi_r^2 - 2 a b f(r) \frac{r_{\rm e}^2 - r^2}{r_{\rm e}^2 + a^2} \xi_{\phi_*} \xi_r + c(\theta) \xi_\theta^2 \\
		&\qquad\phantom{=}{} + \left( \frac{b^2}{c(\theta)\sin^2(\theta)}\left(\frac{r_{\rm e}^2 + a^2 \cos^2(\theta)}{r_{\rm e}^2 + a^2} \right)^2 - b^2 \frac{1 - f(r)^2}{\mu(r)} \left(a \frac{r_{\rm e}^2 - r^2}{r_{\rm e}^2 + a^2} \right)^2 \right) \xi_{\phi_*}^2.
\end{align*}
It immediately follows that $N^*\{r = r_{\rm e}\} \subset \Char(P_\s)$ and the Hamiltonian vector field at ${N^*\{r = r_{\rm e}\}}$ is given by
\[
	\H_{\p_\s}
		= \big(r^2 + a^2\cos^2(\theta)\big)^{-1} \mu'(r_{\rm e}) \xi_r^2 \d_{\xi_r}.
\]
It follows that the bicharacteristic flow at the conormal bundle of $\{r = r_{\rm e}\}$ is exactly \emph{radial}, as opposed to radial in the generalized sense.
The stability of the source/sink can be show, for example, as in the proof of \cite[Lemma~2.5]{PV2023}.
The fact that no bicharacteristics between~${\{r = r_{\rm e} - \de\}}$ and the fiber infinities of $N^*\{r = r_{\rm e}\}$ intersect the domain of outer communication~$M$ is an immediate consequence of Remark \ref{rmk: second assertion}.
The analogous computations at $r = r_{\rm c}$ when $r_0 = r_{\rm c}$ complete the proof.
\end{proof}

\begin{proof}[Proof of Theorem \ref{thm: Fredholm}]
By Proposition \ref{prop: classical radial points}, the dynamics of the bicharacteristics of $P_\s$ in $L_*$ precisely analogous as in \cite[Section~6.1]{V2013}.
The proof of Theorem \ref{thm: Fredholm} therefore follows the same lines as the proof of \cite[Theorem~1.4]{V2013}.
\end{proof}

\begin{proof}[Proof of Theorem \ref{thm: QNMs}]
We again consider the analytic Fredholm family
\[
  P_\s\colon\ \big\{u \in \bar H^s \mid P_\s u \in \bar H^{s-1} \big\} \to \bar H^{s-1}
\]
from Theorem \ref{thm: Fredholm}.
A standard energy estimate shows that $P_\s$ is invertible for $\Im(\s) \gg 1$.
Analytic Fredholm theory therefore implies that $P_\s$ has a meromorphic extension to the open set
\[
	\Omega_s
		:= \left\{ \Im (\s) > \frac{1 - 2s}{2\b} \right\}.
\]
In particular, $P_\s$ is invertible everywhere in $\Omega_s$ apart form a discrete set.
Moreover, since $P_\s$ has index zero, $P_\s$ is invertible if and only if the kernel of $P_\s$ is trivial.
Since
\[
	\C = \bigcup_{s \in \R} \Omega_s,
\]
we conclude that $\ker(P_\s)$ is non-trivial precisely on a discrete set $\A \subset \C$.
Following the arguments in the proof of \cite[Theorem~1.2]{PV2021} line by line, using Theorem \ref{thm: Fredholm} in place of the \cite[Theorem~1.1]{V2013}, it follows that the elements in $\ker(P_\s)$ are real analytic if the coefficients of $P$ are real analytic.
\end{proof}

\begin{proof}[Proof of Theorem \ref{thm: asymptotic expansion}]
We again consider the analytic Fredholm family
\[
  P_\s\colon\ \big\{u \in \bar H^s \mid P_\s u \in \bar H^{s-1} \big\} \to \bar H^{s-1}
\]
from Theorem \ref{thm: Fredholm}.
The semi-classical trapping is corresponding to the trapping of bicharacteristics of the full wave operator $P$.
Since \cite[Theorem~3.2]{PV2023} implies that the trapping of bicharacteristics of $P$ is normally hyperbolic, the proof of the semi-classical estimates and consequently the proof of Theorem \ref{thm: asymptotic expansion} proceeds completely analogous to the proof of \cite[Theorem~1.4]{V2013}.
\end{proof}

\subsection*{Acknowledgements}
This paper is dedicated to Christian B\"ar's 60th birthday.
We would like to thank him for all his inspiring work.
The first author gratefully acknowledges the support from Swedish Research Council under grant number 2021-04269.
The second author gratefully acknowledges support from the National Science Foundation under grant number DMS-1953987.

\pdfbookmark[1]{References}{ref}
\LastPageEnding


\begin{thebibliography}{99}
\footnotesize\itemsep=0pt
\bibitem{Dya2015}
Dyatlov S., Asymptotics of linear waves and resonances with applications to
 black holes,  \href{https://doi.org/10.1007/s00220-014-2255-y}{\textit{Comm. Math. Phys.}} \textbf{335} (2015), 1445--1485,
 \href{https://arxiv.org/abs/1305.1723}{arXiv:1305.1723}.

\bibitem{Dya2015b}
Dyatlov S., Resonance projectors and asymptotics for {$r$}-normally hyperbolic
 trapped sets, \href{https://doi.org/10.1090/S0894-0347-2014-00822-5}{\textit{J.~Amer. Math. Soc.}} \textbf{28} (2015), 311--381,
 \href{https://arxiv.org/abs/1301.5633}{arXiv:1301.5633}.

\bibitem{Dya2016}
Dyatlov S., Spectral gaps for normally hyperbolic trapping, \href{https://doi.org/10.5802/aif.3005}{\textit{Ann. Inst.
 Fourier (Grenoble)}} \textbf{66} (2016), 55--82, \href{https://arxiv.org/abs/1403.6401}{arXiv:1403.6401}.

\bibitem{H2021}
Hintz P., Mode stability and shallow quasinormal modes of Kerr--de~Sitter black
 holes away from extremality, \href{https://doi.org/10.4171/JEMS/1463}{\textit{J.~Eur. Math. Soc. (JEMS)}}, {t}o appear,
 \href{https://arxiv.org/abs/2112.14431}{arXiv:2112.14431}.

\bibitem{HorIII}
H\"ormander L., The analysis of linear partial differential operators. {III}.
 Pseudo-differential operators, Classics Math., \href{https://doi.org/10.1007/978-3-540-49938-1}{Springer}, Berlin, 2007.

\bibitem{PV2023}
Petersen O., Vasy A., Wave equations in Kerr--de~Sitter spacetimes: the full
 subextremal range, \href{https://doi.org/10.4171/JEMS/1448}{\textit{J.~Eur. Math. Soc. (JEMS)}}, {t}o appear,
 \href{https://arxiv.org/abs/2112.01355}{arXiv:2112.01355}.

\bibitem{PV2021}
Petersen O., Vasy A., Analyticity of quasinormal modes in the {K}err and
 {K}err--de {S}itter spacetimes, \href{https://doi.org/10.1007/s00220-023-04776-9}{\textit{Comm. Math. Phys.}} \textbf{402}
 (2023), 2547--2575, \href{https://arxiv.org/abs/2104.04500}{arXiv:2104.04500}.

\bibitem{V2013}
Vasy A., Microlocal analysis of asymptotically hyperbolic and {K}err--de
 {S}itter spaces (with an appendix by {S}emyon {D}yatlov), \href{https://doi.org/10.1007/s00222-012-0446-8}{\textit{Invent.
 Math.}} \textbf{194} (2013), 381--513, \href{https://arxiv.org/abs/1012.4391}{arXiv:1012.4391}.

\bibitem{WZ2011}
Wunsch J., Zworski M., Resolvent estimates for normally hyperbolic trapped
 sets, \href{https://doi.org/10.1007/s00023-011-0108-1}{\textit{Ann. Henri Poincar\'e}} \textbf{12} (2011), 1349--1385,
 \href{https://arxiv.org/abs/1003.4640}{arXiv:1003.4640}.

\end{thebibliography}
\end{document}